\theoremstyle{plain}
\newtheorem{theorem}{\bf Theorem}[section]
\newtheorem{proposition}[theorem]{\bf Proposition}
\newtheorem{lemma}[theorem]{\bf Lemma}
\newtheorem{corollary}[theorem]{\bf Corollary}
\theoremstyle{definition}
\newcommand{\N}{\mathbb N}
\newcommand{\Z}{\mathbb Z}
\newcommand{\Q}{\mathbb Q}
\DeclareMathOperator{\lcm}{lcm}
\DeclareMathOperator{\Pic}{Pic}
\newcommand{\red}{{\text{\rm red}}}
\numberwithin{equation}{section}
\begin{document}

\title[A realization theorem for sets of lengths]{A realization theorem for sets of lengths \\ in numerical monoids}

\address{Institute for Mathematics and Scientific Computing\\ University of Graz, NAWI Graz\\ Heinrichstra{\ss}e 36\\ 8010 Graz, Austria }
\email{alfred.geroldinger@uni-graz.at}
\urladdr{http://imsc.uni-graz.at/geroldinger}

\address{Universit{\'e} Paris 13 \\ Sorbonne Paris Cit{\'e} \\ LAGA, CNRS, UMR 7539,  Universit{\'e} Paris 8\\ F-93430, Villetaneuse, France \\ and \\ Laboratoire Analyse, G{\'e}om{\'e}trie et Applications (LAGA, UMR 7539) \\ COMUE  Universit{\'e} Paris Lumi{\`e}res \\  Universit{\'e} Paris 8, CNRS \\  93526 Saint-Denis cedex, France} \email{schmid@math.univ-paris13.fr}

\author{Alfred Geroldinger  and Wolfgang A. Schmid}

\thanks{This work was supported by the Austrian Science Fund FWF, Project Number P 28864-N35}

\keywords{numerical monoids, numerical semigroup algebras, sets of lengths, sets of distances}

\subjclass[2010]{20M13, 20M14}

\begin{abstract}
We show that for every finite nonempty subset $L$ of $\N_{\ge 2}$ there are a numerical monoid $H$ and a squarefree element $a \in H$ whose set of lengths $\mathsf L (a)$ is equal to $L$.
\end{abstract}

\maketitle

%%%%%%%%%%%%%%%%%%%%%%%%%%%%%%%%%%%%%%%%%%%%%%%%%%%%%%%%%%%%%%%%%%%%%%%%%
%%                                      %%%%%%%%%%%%%%%
%%%%%%%%%%%%%%%%%%%%%%%%%%%%%%%%%%%%%%%%%%%%%%%%%%%%%%%%%%%%%%%%%%%%%%%%%
\bigskip
\section{Introduction} \label{1}
\bigskip

In the last decade the arithmetic of numerical monoids has found wide interest in the literature.  Since numerical monoids are finitely generated,  every element of a given monoid can be written as a sum of atoms and all arithmetical invariants describing the non-uniqueness of factorizations are finite. The focus of research was on obtaining precise values for the arithmetical invariants (e.g., \cite{As-GS16, Om12a, Ba-Ne-Pe17a}), on their interplay with minimal relations of a given presentation (e.g., \cite{C-G-L09}), and also on computational aspects (e.g., \cite{GG-MF-VT15, GS16a}  and \cite{numericalsgps} for a software package in GAP). A further direction of research was to establish realization results for arithmetical parameters. This means to show that there are numerical monoids whose arithmetical parameters have prescribed values. So for example, it was proved only recently that every finite set (with some obvious restrictions) can be realized as the set of catenary degrees of a numerical monoid (\cite{ON-Pe18a}). The goal of the present note is to show a realization theorem for sets of lengths.

Let $H$ be a numerical monoid. If $a \in H$ and $a=u_1+\ldots+u_k$, where $u_1, \ldots, u_k$ are atoms of $H$, then $k$ is called a factorization length of $a$ and the set $\mathsf L (a) \subset \N$ of all factorization lengths is called the set of lengths of $a$. Further, $\mathcal L (H) = \{\mathsf L (a) \mid a \in H \}$ denotes the system of sets of lengths of $H$. It is easy to see that all sets of lengths are finite nonempty and  can get arbitrarily large, and it is well-known that they have a well-defined structure (see the beginning of Section \ref{3}). As a converse, we show in the present paper that for every finite nonempty set $L \subset \N_{\ge 2}$ there is a numerical monoid $H$ and a squarefree element $a \in H$ such that $\mathsf L (a) = L$ (Theorem \ref{3.3}). In fact, we show more precisely that the number of factorizations of each length can be prescribed.  Several types of realization results for sets of lengths are known in the literature, most of them in the setting of  Krull monoids (see \cite{Ka99a,  Ge-Ha-Le07, Sc09a, Fr13a, Fr-Na-Ri18a}, \cite[Theorem 7.4.1]{Ge-HK06a}). However, we know that if $H$ is a numerical monoid, then $\mathcal L (H) \ne \mathcal L (H')$ for every  Krull monoid $H'$ (see \cite[Theorem 5.5]{Ge-Sc-Zh17b} and note that every numerical monoid is strongly primary).

It is an open problem which finite sets of positive integers can occur as sets of distances of numerical monoids. Based on our main result we can show that every finite set is contained in a set of distances of a numerical monoid (Corollary \ref{3.4}). There is a vibrant interplay between numerical monoids, and more generally affine monoids, and the associated semigroup algebras (\cite{Ba-Do-Fo97, Ba06b, Br-Gu09a}). In Corollary \ref{3.5} we shift our realization result from numerical monoids to numerical semigroup algebras.

\medskip
\section{Background on the arithmetic of numerical monoids} \label{2}
\medskip

We denote by $\mathbb P \subset \N \subset \Z \subset \Q$ the set of prime numbers, positive integers, integers, and rational numbers respectively. For $a, b \in \Q$, let $[a, b] = \{ x \in \Z \mid a \le x \le b \}$ be the discrete interval of  integers lying between $a$ and $b$. If $A, B \subset \Z$, then $A+B=\{a+b \mid a \in A, b \in B\}$ denotes the sumset and $kA = A+ \ldots + A$ is the $k$-fold sumset for every $k \in \N$.
If $A = \{m_1, \ldots, m_k \} \subset \Z$ with $m_{i-1} < m_{i}$ for each $i \in [2,k]$, then $\Delta (A) = \{m_i - m_{i-1} \mid i \in [2,k] \} \subset \N$ is the set of distances of $L$. Note that $\Delta (A)=\emptyset$ if and only if $|A| \le 1$.

By a {\it monoid}, we mean a commutative cancellative semigroup with identity element. Let $H$ be a monoid. Then $H^{\times}$ denotes the group of invertible elements, $\mathsf q (H)$ the quotient group of $H$, and $\mathcal A (H)$ the set of atoms (irreducible elements) of $H$. We say that $H$ is reduced if the identity element is the only invertible element. We call $H_{\red} =H/H^{\times}$ the reduced monoid associated to $H$. A {\it numerical monoid} is a submonoid of $(\N_0, +)$ whose complement in $\N_0$  is finite. Every numerical monoid is finitely generated, reduced, and its quotient group is $\Z$. For any set $P$, let $\mathcal F (P)$ denote the free abelian monoid with basis $P$. Then, using additive notation, every element $a \in \mathsf q ( \mathcal F (P) )$ can be written uniquely in the form
\[
a = \sum_{p \in P} l_p  p \,,
\]
where $l_p \in \Z$ for each $p \in P$, and all but finitely many $l_p$ are equal to $0$. For $a = \sum_{p \in P} l_p  p \in \mathcal F (P)$,  we set $|a|= \sum_{p\in P} l_p \in \N_0$ and call it the length of $a$.

We recall some arithmetical concept of monoids. Since our focus is on numerical monoids we use additive notation. Let $H$ be an additively written  monoid. The (additively written) free abelian monoid $\mathsf Z (H)= \mathcal F ( \mathcal A (H_{\red}))$ is called the {\it factorization monoid} of $H$ and the canonical epimorphism $\pi \colon \mathsf Z (H) \to H_{\red}$ is the factorization homomorphism. For $a
\in H$ and $k\in\N$,
\[
\begin{aligned}
\mathsf Z_H (a) = \mathsf Z (a) & =\pi^{-1} (a+H^\times)\subset \mathsf Z (H)\quad \text{is the\ {\it set of factorizations}\ of\ $a$}\,,\\
\mathsf Z_{H,k} (a)= \mathsf Z_k (a) & =\{ z\in\mathsf Z (a)\mid |z| = k\}\quad \text{is the\ {\it set of factorizations}\ of\ $a$ of length\ $k$},\quad\text{and} \\
\mathsf L_H (a) = \mathsf L (a) & =\bigl\{ |z|\,\bigm|\, z\in \mathsf Z (a)\bigr\}\subset\N_0\quad\text{is the\ {\it set of  lengths}\ of $a$}\,.
\end{aligned}
\]
Thus, by definition, $\mathsf L (a) = \{0\}$ if and only if $a \in H^{\times}$  and $\mathsf L (a)=\{1\}$ if and only if $a \in \mathcal A (H)$.
The monoid $H$ is said to be atomic if $\mathsf Z (a) \ne \emptyset$ for all $a \in H$ (equivalently, every non invertible element is a finite sum of atoms).
We call
\begin{itemize}
\item $\mathcal L (H) = \{ \mathsf L (a) \mid a \in H\}$ the {\it system of sets of lengths} of $H$, and

\item $\Delta (H) = \bigcup_{L \in \mathcal L (H)} \Delta (L)$ the {\it set of distances} (also called {\it delta set})  of $H$.
\end{itemize}
Every numerical monoid $H$ is atomic with finite set of distances $\Delta (H)$, and $\Delta (H)=\emptyset$ if and only if   $H = \N_0$.

\medskip
\section{A realization theorem for sets of lengths} \label{3}
\medskip

The goal of this section is to prove our main realization theorem, namely that for every finite nonempty subset $L \subset \N_{\ge 2}$ there exists a numerical monoid $H$ such that $L$ is a set of lengths of $H$ (Theorem \ref{3.3}). We show the existence of this monoid by an explicit recursive construction over the size of $L$. Instead of working with numerical monoids directly, we work in the setting of finitely generated additive submonoids of the nonnegative rationals. Additive submonoids of $(\Q_{\ge 0}, +)$ are called Puiseux monoids and have recently been studied in a series of papers by F. Gotti et al. (e.g., \cite{Go17a}). In the setting of Puiseux monoids all arithmetical concepts refer to addition and not to multiplication of rationals. In particular,
an element $a$ of a Puiseux monoid $H$ is said to be squarefree if there are no nonzero elements $b, c \in H$ such that $a = b+b+c$.

Clearly, the constructed numerical monoid  heavily depends on the given set $L$. This is inevitable because for every fixed numerical monoid $H$, sets of lengths have a well-defined structure.  Indeed, there is a constant $M \in \N_0$ (just depending on $H$) such that every $L \in \mathcal L (H)$ has the form
\begin{equation} \label{structure}
L = y + \bigl( L' \cup \{\nu d \mid \nu  \in [0,l] \} \cup L''
\bigr) \ \subset \ y + d \Z \,,
\end{equation}
where $d = \min \Delta (H)$, $y \in \mathbb Z$, $L' \subset [-M, -1]$, and $L'' \subset l d + [1, M]$ (\cite[Theorem 4.3.6]{Ge-HK06a}).

\smallskip
We start with a technical lemma.

\medskip
\begin{lemma} \label{3.1}
Let $k \in \N_{\ge 2}$. Then there exist pairwise distinct nonzero $c_1, \ldots, c_k \in [-k^{k-1}, k^{k-1}]$ with $c_1+ \ldots + c_k=0$ such that for all primes $p > (k+1)k^{k-1}$ the following property holds{\rm \,:} if $l_1, \ldots, l_k \in \N_0$ such that $\sum_{i=1}^k l_ic_i \equiv 0 \mod p$, then
\begin{itemize}
\item[] $l_1= \ldots =  l_k=0$ \ or \ $l_1= \ldots =  l_k=1$ \ or \  $l_1+ \ldots + l_k > k$.
\end{itemize}
\end{lemma}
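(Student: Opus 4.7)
The plan is to give an explicit construction. Take $c_i = k^{i-1}$ for $i \in [1, k-1]$ and $c_k = -(c_1 + \cdots + c_{k-1}) = -(k^{k-1}-1)/(k-1)$. These integers are pairwise distinct and nonzero (the first $k-1$ are distinct positive powers of $k$, while $c_k$ is negative), sum to zero by construction, and each lies in $[-k^{k-1}, k^{k-1}]$: for $i \le k-1$ we have $|c_i| = k^{i-1} \le k^{k-2}$, and $|c_k| = (k^{k-1}-1)/(k-1) \le k^{k-1}$ since $k \ge 2$.

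Next, fix a prime $p > (k+1)k^{k-1}$ and a tuple $(l_1, \ldots, l_k) \in \N_0^k$ with $\sum_i l_i c_i \equiv 0 \pmod{p}$. The conclusion is automatic when $l_1 + \cdots + l_k > k$, so it suffices to handle $\sum_i l_i \le k$. Then $|\sum_i l_i c_i| \le k \cdot k^{k-1} = k^k < (k+1)k^{k-1} < p$, so the congruence upgrades to the integer identity $\sum_i l_i c_i = 0$. Substituting $c_k$ rewrites this as $\sum_{i=0}^{k-2} a_i k^i = 0$ where $a_i := l_{i+1} - l_k \in [-k, k]$.

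The heart of the argument is to deduce $l_1 = \cdots = l_k$ from this identity via uniqueness of base-$k$ representations with signed digits. The key subtlety is to rule out the extremal value $|a_i| = k$, which would force $\{l_{i+1}, l_k\} = \{0, k\}$; combined with $\sum_j l_j \le k$, this would pin all remaining $l_j$ to $0$, so the integer identity would collapse to $\pm k\, c_{i+1} = 0$ or $k\, c_k = 0$, contradicting that no $c_j$ vanishes. With $|a_i| < k$ for every $i$, the standard uniqueness of base-$k$ expansions with signed digits of absolute value strictly less than $k$ yields $a_0 = \cdots = a_{k-2} = 0$, hence $l_1 = \cdots = l_k$. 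The constraint $\sum_j l_j \le k$ then forces this common value to lie in $\{0, 1\}$, matching the lemma exactly.

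The main obstacle I foresee is sharpness: a looser construction such as $c_i = (k+1)^{i-1}$ (which would give a cleaner signed-digit uniqueness) overshoots the bound $k^{k-1}$, so I must use base $k$ itself and rely on the preceding size argument to exclude the boundary digit $|a_i|=k$. It is precisely the numerical coincidence between the bound $k^{k-1}$ on $|c_i|$ and the threshold $(k+1)k^{k-1}$ on $p$ that makes the signed-digit uniqueness strong enough for this tight construction to succeed.
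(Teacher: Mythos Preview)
Your proof is correct and follows essentially the same approach as the paper: the identical construction $c_i=k^{i-1}$, $c_k=-\sum_{i<k}c_i$, the same reduction of the congruence to the integer identity $\sum_i l_i c_i=0$ via the size bound, and the same appeal to uniqueness of base-$k$ expansions to force $l_1=\cdots=l_k$. The only differences are organizational---you bound $|\sum_i l_i c_i|$ directly after assuming $\sum_i l_i\le k$, and you phrase the uniqueness step via signed digits $a_i=l_{i+1}-l_k$ rather than the paper's case split on whether some $l_j\ge k$---but these are equivalent packagings of the same argument.
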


\begin{proof}
For $i \in [1,k-1]$ we define $c_i = k^{i-1}$, and we set $c_k = - \sum_{i=1}^{k-1} c_i$. Then clearly,
\[
c_k = - \sum_{i=1}^{k-1} c_i = - \sum_{i=1}^{k-1} k^{i-1} = - \frac{k^{k-1}-1}{k-1} \,.
\]
Now we choose a prime $p > (k+1)k^{k-1}$ and $l_1, \ldots, l_k \in \N_0$ such that $\sum_{i=1}^k l_ic_i \equiv 0 \mod p$
and  $\sum_{i=1}^k l_i > 0$. We may distinguish the following two cases.

\noindent
CASE 1: \ $\sum_{i=1}^{k-1}l_ic_i \ge p$ or $l_kc_k \le -p$.

If $p \le \sum_{i=1}^{k-1}l_ic_i \le \big( \sum_{i=1}^{k-1}l_i \big) c_{k-1}$, then
\[
\sum_{i=1}^{k}l_i \ge \sum_{i=1}^{k-1}l_i \ge \frac{p}{c_{k-1}} > \frac{(k+1)k^{k-1}}{c_{k-1}} \ge k+1 \,.
\]
If $p \le l_k|c_k|$, then
\[
\sum_{i=1}^{k}l_i \ge l_k \ge \frac{p}{|c_k|} > \frac{(k+1)k^{k-1}}{|c_{k}|} \ge k+1 \,.
\]

\smallskip
\noindent
CASE 2: \ $\sum_{i=1}^{k-1}l_ic_i < p$ and $l_kc_k > -p$.

Since $\sum_{i=1}^k l_i c_i \equiv 0 \mod p$, we infer that $\sum_{i=1}^k l_i c_i = 0$. Suppose that there is a $j \in [1,k]$ with $l_j \ge k$. Since at least two elements of $l_1, \ldots, l_k$ are positive, it follows that   $ \sum_{i=1}^k l_{i} > k$. Suppose that $l_i \in [0, k-1]$ for all $i \in [1, k]$. Since $0 = \sum_{i=1}^k l_ic_i$, the definition of $c_1, \ldots, c_k$ implies that
\[
\sum_{i=1}^{k-1} l_i k^{i-1} = \sum_{i=1}^{k-1}l_k k^{i-1} \,.
\]
By the uniqueness of the $k$-adic digit expansion, we infer  that $l_i=l_k$ for all $i \in [1,k-1]$. If $l_1=1$, then $l_1= \ldots = l_k=1$. If $l_1> 1$, then $l_1+\ldots +l_k=kl_1 > k$.
\end{proof}

The following proposition will be our key tool to do the recursive construction step in Theorem \ref{3.3}.
For every prime $p \in \mathbb P$, we denote by $\mathsf v_p$ the usual $p$-adic valuation of the rationals, that is, for  $q \in \mathbb{Q}\setminus \{0\}$,  $\mathsf v_p(q)$ the integer $j$ such that $q = p^j \frac{a}{b}$ with integers $a,b$ such that $p \nmid ab$. Moreover,  we set $\mathsf v_p(0)= \infty$.

\medskip
\begin{proposition} \label{3.2}
Let $k \in \N_{\ge 2}$ and $H \subset (\Q_{\ge 0}, +)$ be a finitely generated monoid with $\N_0 \subset H$ and $\mathcal A (H) \subset \Q_{<1}$. Then there exists a finitely generated monoid $H'$ with $H \subset H'$ and $\mathcal A (H) \subset \mathcal A (H') \subset \Q_{<1}$ such that the following properties are satisfied{\rm \,:}
\begin{enumerate}
\item[(a)] For all $u \in H$ with $u < 1$ we have $\mathsf Z_H (u) = \mathsf Z_{H'} (u)$.

\smallskip
\item[(b)] $\mathsf Z_{H'}(1) = \mathsf Z_H (1) \uplus \{q_1 + \ldots + q_k\}$, where $q_1, \ldots, q_k$ are pairwise distinct and  $\mathcal A (H') = \mathcal A (H) \uplus \{q_1, \ldots, q_k\}$.
\end{enumerate}
\end{proposition}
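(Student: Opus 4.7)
My strategy is to build $H'$ by adjoining $k$ rationals obtained as small $p$-adic perturbations of $\tfrac{1}{k}$. First I apply Lemma~\ref{3.1} to obtain pairwise distinct nonzero integers $c_1,\ldots,c_k \in [-k^{k-1}, k^{k-1}]$ with $\sum_{i=1}^k c_i = 0$. Then I choose a prime $p$ satisfying three conditions: (i)~$p > (k+1)k^k$, which both guarantees the hypothesis of Lemma~\ref{3.1} and forces each $q_i := \tfrac{1}{k} + \tfrac{c_i}{p}$ to lie in $\Q_{<1}$ with the sharper lower bound $q_i > \tfrac{1}{k+1}$; (ii)~$\gcd(p,k)=1$; and (iii)~$p$ does not divide the denominators of any of the finitely many atoms of $H$, so that $\mathsf v_p(h)\ge 0$ for every $h \in H$. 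Finally I define $H' := \langle \mathcal A(H) \cup \{q_1,\ldots,q_k\}\rangle$, a finitely generated submonoid of $(\Q_{\ge 0},+)$ containing $H$.

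By construction the $q_i$ are pairwise distinct, $\sum q_i = 1$, and a direct calculation gives $\mathsf v_p(q_i) = -1$; hence no $q_i$ lies in $H$ and $\mathcal A(H) \cap \{q_1,\ldots,q_k\} = \emptyset$. Every $z \in \mathsf Z(H')$ thus decomposes uniquely as $z = z_H + \sum_{i=1}^k l_i q_i$ with $z_H \in \mathsf Z(H)$ and $l_i \in \N_0$, mapping to $\pi(z_H) + \sum l_i q_i$ in $H'$.

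The heart of the argument is a $p$-adic analysis of such a decomposition whenever its image is an element $u \in H$ with $u \le 1$. From $\sum l_i q_i = u - \pi(z_H)$ both sides have nonnegative $p$-adic valuation, and multiplying through by $kp$ and reducing modulo $p$ yields $\sum l_i c_i \equiv 0 \pmod p$. Lemma~\ref{3.1} then leaves three possibilities: all $l_i = 0$, all $l_i = 1$, or $\sum l_i > k$. The last case is incompatible with $\sum l_i q_i \le 1$, because $q_i > \tfrac{1}{k+1}$ forces $\sum l_i q_i > 1$ as soon as $\sum l_i \ge k+1$. For part~(a) with $u < 1$, the all-ones case is also excluded (it would give $\sum l_i q_i = 1 > u$), so $z = z_H \in \mathsf Z_H(u)$. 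For part~(b) with $u = 1$, the all-zero branch recovers $\mathsf Z_H(1)$, while the all-ones branch forces $\pi(z_H) = 0$ and hence, since $H \subset \Q_{\ge 0}$ is reduced, produces the single new factorization $q_1 + \ldots + q_k$.

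It then remains to confirm $\mathcal A(H') = \mathcal A(H) \uplus \{q_1,\ldots,q_k\}$. Every $a \in \mathcal A(H)$ stays an atom since part~(a) shows its $H'$-factorizations are exactly its $H$-factorizations. For each $q_i$ I rerun the multiply-by-$kp$ trick on a hypothetical nontrivial factorization $q_i = \pi(z_H) + \sum l_j q_j$ in $H'$: this yields $\sum l_j c_j \equiv c_i \pmod p$, which sharpens to $\sum l_j c_j = c_i$ using the bound $\sum l_j \le k$ (from $q_j > \tfrac{1}{k+1}$ and $\sum l_j q_j < 1$) together with the size estimates on the $c_j$; substituting back gives $k\pi(z_H) = 1 - \sum l_j$, and since $\pi(z_H) \ge 0$ this forces $\sum l_j = 1$, collapsing to the trivial factorization. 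The main obstacle I anticipate is calibrating $p$ so that Lemma~\ref{3.1}'s modular obstruction and the archimedean bound $q_i > \tfrac{1}{k+1}$ are simultaneously active; once both hold, the three Lemma~\ref{3.1} cases collapse cleanly to the two we need, and the remaining verifications are bookkeeping.
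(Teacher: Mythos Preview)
Your proposal is correct and essentially identical to the paper's argument: perturb $1/k$ by multiples of $1/p$ for a large prime $p$ avoiding the denominators in $\mathcal A(H)$, then combine the $p$-adic congruence $\sum l_i c_i \equiv 0 \pmod p$ with Lemma~\ref{3.1} and the archimedean bound $q_i > 1/(k+1)$ to pin down all representations of elements $\le 1$ (the paper takes $q_i = \tfrac{1}{k} + \tfrac{c_i}{kp}$ rather than your $\tfrac{1}{k} + \tfrac{c_i}{p}$, which is why $p > (k+1)k^{k-1}$ suffices there). One caution on ordering: your decomposition ``$z = z_H + \sum l_i q_i$ with $z_H \in \mathsf Z(H)$'' already presupposes $\mathcal A(H') = \mathcal A(H) \uplus \{q_1,\ldots,q_k\}$, so when writing this out you should establish the atom set first (as the paper does via its auxiliary claims \textbf{A1} and \textbf{A2}) or phrase the $p$-adic step in terms of representations over the generating set rather than factorizations.
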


\begin{proof}
We set
\[
\mathcal A (H) = \Big\{ \frac{a_1}{b_1}, \ldots, \frac{a_s}{b_s} \Big\}
\]
where $a_i, b_i \in \N$ with $\gcd (a_i,b_i)=1$ for all $i \in [1,s]$.  Let $c_1, \ldots, c_k \in [-k^{k-1}, k^{k-1}]$  such that all properties of Lemma \ref{3.1} are satisfied.
We choose a prime number $p \in \N$ such that
\[
p \nmid \lcm (b_1, \ldots, b_s) \quad \text{and} \quad p  > (k+1)k^{k-1} \,,
\]
and we define
\[
q_i = \frac{p+c_i}{kp} \quad \text{for every} \quad i \in [1,k] \,.
\]
By construction, we have $q_1+ \ldots + q_k=1$ and $\mathsf v_p (q_i) = -1$ whence $q_i \notin H$ for all $i \in [1,k]$. We define
\[
H' = \big[ H, q_1, \ldots, q_k \big] \subset (\Q_{\ge 0}, +)
\]
to be the additive submonoid of nonnegative rationals generated by the elements of $H$ and by $q_1, \ldots, q_k$. Thus $H'$ is generated by $\mathcal A (H) \cup \{q_1, \ldots, q_k\}$ whence finitely generated. Since $H'$ is reduced, \cite[Proposition 1.1.7]{Ge-HK06a} implies that $H'$ is atomic and
\begin{equation} \label{main-inclusion}
\mathcal A (H') \subset \mathcal A (H) \cup \{q_1, \ldots, q_k\} \,.
\end{equation}
We continue with the following assertions.

\smallskip
\begin{enumerate}
\item[{\bf A1.}\,] $\{q_1, \ldots, q_k\} \subset \mathcal A (H')$.

\item[{\bf A2.}\,] Let $u \in H$ and suppose that $u$ has a factorization $z \in \mathsf Z_{H'} (u)$ which is divisible by some element from $\{q_1, \ldots, q_k\}$. Then either $u>1$ or $z=q_1+ \ldots + q_k \in \mathsf Z_{H'} (u)$ (whence in particular $u=1$).
\end{enumerate}

\noindent
{\it Proof of} \,{\bf A1}.\, Assume to the contrary that there is an $i \in [1,k]$ such that $q_i \notin \mathcal A (H')$. Since $q_i \notin H$, it is divisible by an atom from $\mathcal A (H') \setminus \mathcal A (H) \subset \{q_1, \ldots, q_k\}$, say  $q_i = q_j + b$ with $j \in [1,k] \setminus \{i\}$ and  $b \in H' \setminus \{0\}$. We claim that $b \notin H$. Since $0 \ne b = q_i - q_j$ and $0 \ne |c_i-c_j| \le 2 k^{k-1} < p$,
\[
\mathsf v_p (q_i-q_j) = \mathsf v_p \Big( \frac{c_i-c_j}{kp} \Big) = -1
\]
which implies that $b \notin H$. Thus there is an $l \in [1,k]$ such that $b = q_l + d$ with $d \in H' \subset \Q_{\ge 0}$. Since $p > (k+1)k^{k-1} \ge 3k^{k-1} \ge |c_i|+|c_j|+|c_l|$, it follows that
\[
q_i   = q_j + q_l + d \ge q_j+q_l = \frac{2p+c_j+c_l}{kp} > \frac{p+c_i}{kp} = q_i \,,
\]
a contradiction. \qed[Proof of {\bf A1}]

\smallskip
\noindent
{\it Proof of} \,{\bf A2}.\, Since $u$ has a factorization which is divisible by some element from $\{q_1, \ldots, q_k\}$, there are $l_1, \ldots, l_k \in \N_0$ and $v \in H$ such that
\[
u = v + \sum_{i=1}^k l_i q_i \quad \text{and} \quad \sum_{i=1}^k l_i > 0 \,.
\]
Since $\mathsf v_p (u) \ge 0$ and $\mathsf v_p (v) \ge 0$, it follows that
\[
0 \le \mathsf v_p (u-v)= \mathsf v_p \Big( \sum_{i=1}^k l_i q_i \Big) = \mathsf v_p \Big( \frac{ \sum_{i=1}^k l_ip + \sum_{i=1}^k l_ic_i}{kp} \Big)
\]
whence $\sum_{i=1}^k l_ic_i \equiv 0 \mod p$. Therefore Lemma \ref{3.1} implies that
\[
l_1= \ldots = l_k =1 \quad \text{or} \quad  \sum_{i=1}^k l_i > k \,.
\]
If $\sum_{i=1}^k l_i > k$ and $j \in [1,k]$ with $q_j = \min \{q_1, \ldots, q_k\}$, then
\begin{equation} %\label{main4}
u = v + \sum_{i=1}^l l_iq_i \ge (k+1) q_j = (k+1) \frac{p+c_j}{kp} > 1 \,,
\end{equation}
where the last inequality uses that $p > (k+1)k^{k-1} \ge (k+1)|c_j|$.  If $l_1= \ldots = l_k =1$, then
\[
u = \sum_{i=1}^l l_iq_i + v = q_1 + \ldots + q_k + v = 1 + v \,.
\]
Thus $v>0$ implies $u>1$ and $v=0$ implies $u=1$ and $z=q_1+ \ldots + q_k$.
\qed[Proof of {\bf A2}]

\smallskip
If $u \in \mathcal A (H)$, then $u < 1$ by assumption and {\bf A2} implies  that $u$ is not divisible by any element from $\{q_1, \ldots, q_k\}$ and therefore $u \in \mathcal A (H')$.  Thus we obtain that     $\mathcal A (H) \subset \mathcal A (H')$ and together with {\bf A1} and \eqref{main-inclusion}, it follows  that
\begin{equation} \label{main2}
\mathcal A (H') = \mathcal A (H) \uplus \{q_1, \ldots, q_k\} \,.
\end{equation}
Thus,  we have that
\begin{equation} \label{main3}
\mathsf Z (H) = \mathcal F (\mathcal A (H)) \subset \mathcal F ( \mathcal A (H')) = \mathsf Z (H') \quad \text{and} \quad  \mathsf Z_H (u) \subset \mathsf Z_{H'} (u)
\end{equation}
for every $u \in H$.
If  $u < 1$, then {\bf A2} implies that $\mathsf Z_H (u) = \mathsf Z_{H'} (u)$.

It remains to show Property (b) given in Proposition \ref{3.2}, namely that
\[
\mathsf Z_H (1) \uplus \{q_1 + \ldots + q_k\} = \mathsf Z_{H'}(1) \,.
\]
We see from Equation \eqref{main3} that $\mathsf Z_H (1) \uplus \{q_1 + \ldots + q_k\} \subset \mathsf Z_{H'}(1)$. Conversely, let $z$ be a factorization of $1$ in $H'$. Then either $z \in \mathsf Z_H (1)$ or $z$ is divisible (in $\mathsf Z (H')$) by some element from $\{q_1, \ldots, q_k\}$. In the latter case {\bf A2} implies that $z=q_1+ \ldots + q_k \in \mathsf Z (H')$.
\end{proof}

\medskip
\begin{theorem} \label{3.3}
Let $L \subset \N_{\ge 2}$ be a finite nonempty set and $f \colon L \to \N$ a map. Then there exist a numerical monoid $H$ and a squarefree element $a \in H$ such that
\begin{equation} \label{mainproperty}
\mathsf L (a) = L \quad \text{and} \quad |\mathsf Z_k (a)|= f (k) \quad \text{for every} \ k \in L \,.
\end{equation}
\end{theorem}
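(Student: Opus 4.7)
The approach is to first construct a finitely generated Puiseux monoid $H \subset (\Q_{\ge 0}, +)$ containing $\N_0$ in which $1$ is a squarefree element satisfying $\mathsf L_H(1) = L$ and $|\mathsf Z_{H,k}(1)| = f(k)$ for each $k \in L$, and then descend to a numerical monoid by rescaling. I would carry out the construction by induction on $|L|$, using Proposition \ref{3.2} as the engine. The invariant maintained at every stage is that $H$ satisfies the hypotheses of Proposition \ref{3.2} (i.e.\ $\N_0 \subset H$ and $\mathcal A(H) \subset \Q_{<1}$) and that $1$ is squarefree in $H$ with the correct factorization data built so far. Both properties are preserved by each application of the proposition: by part~(a) all previous factorizations of $1$ are retained, and by part~(b) the new factorization $q_1+\ldots+q_k$ uses fresh pairwise distinct atoms that are disjoint from all atoms used in earlier factorizations.

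For the base case $|L|=1$, say $L=\{k\}$, I would first build a seed monoid $H_1$ in which $1$ has exactly one factorization of length $k$. Choose $c_1,\ldots,c_k$ and a sufficiently large prime $p$ as in Lemma \ref{3.1}, set $q_i=(p+c_i)/(kp)\in\Q_{<1}$, and let $H_1$ be the submonoid of $(\Q_{\ge 0},+)$ they generate. The arguments {\bf A1} and {\bf A2} from the proof of Proposition \ref{3.2} depend only on mod-$p$ behaviour of the $c_i$ together with size estimates of the form $(k+1)\min_i q_i > 1$; they carry over verbatim with the ``old'' monoid replaced by $\{0\}$, showing that the $q_i$ are pairwise distinct atoms of $H_1$ and that $q_1+\ldots+q_k$ is the unique factorization of $1$. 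Since $\sum q_i = 1 \in H_1$, we automatically have $\N_0 \subset H_1$. Applying Proposition \ref{3.2} to $H_1$ with parameter $k$ an additional $f(k)-1$ times then yields a monoid in which $1$ has exactly $f(k)$ factorizations, all of length $k$, each using a set of $k$ distinct atoms disjoint from the atoms used by the others, so $1$ is squarefree. For the inductive step, write $L = L'\sqcup\{k_0\}$ with $k_0\notin L'$, apply induction to $(L',f|_{L'})$ to obtain $H'$, and then apply Proposition \ref{3.2} to $H'$ with parameter $k_0$ a total of $f(k_0)$ times to obtain $H$ with the required data for $(L,f)$. Squarefreeness of $1$ in $H$ follows from the disjointness-of-atoms observation above, since any hypothetical relation $1=2b+c$ with $b,c\in H\setminus\{0\}$ would, after factoring $b$ and $c$ into atoms, yield a factorization of $1$ in which some atom occurs at least twice.

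Finally, to descend to a numerical monoid, let $N$ be a common multiple of the denominators of all atoms of $H$, put $\delta = \gcd(NH)$, and set $H^* = \tfrac{1}{\delta}(NH)$. Multiplication by $N/\delta$ is a monoid isomorphism $H\to H^*$ that preserves factorization lengths and the squarefree property, $H^* \subset \N_0$ is finitely generated with $\gcd$ of generators equal to $1$, and so $H^*$ is a numerical monoid. Letting $a$ be the image of $1$ in $H^*$ completes the proof. The main obstacle I anticipate is the base case: without being able to cite Proposition \ref{3.2} as a black box, one must re-derive {\bf A1} and {\bf A2} in the trivial-``old''-monoid setting to confirm both that each $q_i$ is actually an atom and that $q_1+\ldots+q_k$ is the only factorization of $1$ in $H_1$. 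These facts follow from precisely the congruence-plus-size arguments already present in the proof of Proposition \ref{3.2}, after which the recursion and the final rescaling are routine.
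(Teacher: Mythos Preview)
Your proposal is correct and follows essentially the same strategy as the paper: reduce to finitely generated Puiseux monoids, build up the factorizations of $1$ by iterating Proposition~\ref{3.2} (with the same direct seed construction for the first factorization), and then rescale to a numerical monoid; the only difference is that the paper inducts on the total count $\sum_{k} f(k)$ rather than on $|L|$ with an inner loop, which is an equivalent bookkeeping choice. One small slip: the fact that previous factorizations of $1$ are retained comes from part~(b) of Proposition~\ref{3.2} (the explicit description $\mathsf Z_{H'}(1) = \mathsf Z_H(1) \uplus \{q_1+\cdots+q_k\}$), not from part~(a), which only concerns elements strictly less than $1$.
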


\begin{proof}
Every finitely generated submonoid of $(\Q_{\ge 0}, +)$ is isomorphic to a numerical monoid (cf. \cite[Proposition 3.2]{Go17a}) and the isomorphism maps squarefree elements onto squarefree elements. Thus it is sufficient to show that, for every set $L$ and every map $f$ as in the statement of the theorem,  there is  a finitely generated submonoid $H$ of the nonnegative rationals with $\N_0 \subset H$ and $\mathcal A (H) \subset \Q_{< 1}$ such that the element $a=1 \in H$ is squarefree in $H$ and has the properties given in \eqref{mainproperty}.

Clearly, it is equivalent to consider nonzero maps  $f \colon \N_{\ge 2} \to \N_0$ with finite support and to find  a  monoid $H$ as above such that $|\mathsf Z_k (1)|= f (k)$ for every $k \in \N_{\ge 2}$ and $1$ is squarefree in $H$.
For every nonzero  map $f \colon \N_{\ge 2} \to \N_0$ with finite support   $\sum_{k \ge 2} f (k)$ is a positive integer
and we proceed by induction on this sum.

To do the base case, let $f \colon \N_{\ge 2} \to \N_0$ be a map with $\sum_{k \ge 2} f (k) = 1$.
Let $k \in \N_{\ge 2}$ with $f (k)=1$. We have to find a finitely generated monoid $H \subset (\Q_{\ge 0}, +)$ with $\mathcal A (H) \subset \Q_{<1}$ and pairwise distinct atoms $q_1, \ldots, q_k \in H$ such that  $\mathsf Z_H (1) = \{q_1+ \ldots + q_k\}$.

We proceed along the lines of the proof  of {\bf A2} in Proposition \ref{3.2}. Indeed,
we choose $c_1, \ldots, c_k \in [-k^{k-1}, k^{k-1}]$  such that all properties of Lemma \ref{3.1} are satisfied and pick a prime number $p \in \N$ with $ p  > (k+1)k^{k-1}$. We set
\[
q_i = \frac{p+c_i}{kp} \quad \text{for every} \quad i \in [1,k]
\]
and define $H = [q_1, \ldots, q_k] \subset \Q_{\ge 0}$.  By \cite[Proposition 1.1.7]{Ge-HK06a}, $\mathcal A (H) \subset \{q_1, \ldots, q_k\}$.  Since for all (not necessarily distinct) $r,s,t \in [1,k]$ we have $q_r < q_s+q_t$, it follows that $q_r \in \mathcal A (H)$. Thus we obtain that  $\mathcal A (H) = \{q_1, \ldots, q_k\}$. Since $q_1+ \ldots + q_k = 1$, it follows that $\{q_1+ \ldots + q_k\} \subset \mathsf Z_H (1)$. To show equality, let $l_1, \ldots, l_k \in \N_0$ such that $1 = \sum_{i=1}^k l_iq_i$.
It follows that $\sum_{i=1}^k l_ic_i \equiv 0 \mod p$. Therefore Lemma \ref{3.1} implies that
\[
l_1= \ldots = l_k =1 \quad \text{or} \quad  \sum_{i=1}^k l_i > k \,.
\]
If $\sum_{i=1}^k l_i > k$ and $j \in [1,k]$ with $q_j = \min \{q_1, \ldots, q_k\}$, then
\begin{equation} %\label{main4}
1 =  \sum_{i=1}^l l_iq_i \ge (k+1) q_j = (k+1) \frac{p+c_j}{kp} > 1 \,,
\end{equation}
a contradiction. Thus $l_1= \ldots = l_k =1$ and the claim follows.

Now let $N \in \N_{\ge 2}$ and suppose that the assertion holds  all nonzero maps $f \colon \N_{\ge 2} \to \N_0$ with finite support and with  $\sum_{k \ge 2} f (k) < N$. Let $f_0 \colon \N_{\ge 2} \to \N_0$ with $\sum_{k \ge 2} f_0 (k) = N$. We choose an element $k_0 \in \N_{\ge 2}$ with $f (k_0) \ne 0$ and define a map $f_1 \colon \N_{\ge 2} \to \N_0$ as $f_1 (k_0) = f_0 (k_0)-1$ and $f_1 (k) = f_0 (k)$ for all $k \in \N_{\ge 2} \setminus \{k_0\}$. By the induction hypothesis, there exists a finitely generated monoid $H_1 \subset (\Q_{\ge 0}, +)$ with $\N_0 \subset H_1$ and $\mathcal A (H_1) \subset \Q_{<1}$  such that $|\mathsf Z_{H_1, k} (1)|= f_1 (k)$ for every $k \in \N_{\ge 2}$ and $1$ is squarefree in $H_1$. By Proposition \ref{3.2} there exist a finitely generated monoid $H_0 \subset (\Q_{\ge 0}, +)$ such that
\[
\mathsf Z_{H_0}(1) = \mathsf Z_{H_1} (1) \uplus \{q_1 + \ldots + q_{k_0} \},
\]
where $q_1, \ldots, q_{k_0}$ are pairwise distinct and $\mathcal A (H_0) = \mathcal A (H_1) \uplus \{q_1, \ldots, q_{k_0} \} \subset \Q_{< 1}$.
Since $q_1, \ldots, q_{k_0}$ are pairwise distinct and since $1$ was squarefree in $H_1$, it follows that $1$ is squarefree in $H_0$. Moreover, $\mathsf Z_{H_0, k}(1) = \mathsf Z_{H_1, k} (1)$ for all $k \in \N_{\ge 2} \setminus \{k_0\}$ and $\mathsf Z_{H_0, k_0}(1) = \mathsf Z_{H_1, k_0} (1) \uplus \{q_1 + \ldots + q_{k_0} \}$. In particular, we have
$|\mathsf Z_{H_0, k} (1)|= f_0 (k)$ for every $k \in \N_{\ge 2}$.
\end{proof}

\smallskip
We continue with a corollary on sets of distances. Let $H$ be an atomic monoid with nonempty set of distances $\Delta (H)$. Then it is easy to verify that $\min \Delta (H) = \gcd \Delta (H)$, and the question is which finite sets $D$ with $\min D = \gcd D$ can be realized as a set of distances in a given class of monoids or domains. The question has an affirmative answer in the class of finitely generated Krull monoids (\cite{Ge-Sc17a}).  If $H$ is a numerical monoid generated by two atoms, say $\mathcal A (H)= \{n_1, n_2\}$, then $\Delta (H) = \{|n_2-n_1|\}$ whence every singleton occurs as a set of distances of a numerical monoid. There are periodicity results on individual sets $\Delta (\mathsf L (a))$ for elements in a numerical monoid (\cite{Sc-Ho-Ka09a}), but the only realization result beyond the simple observation above is due to Colton and Kaplan (\cite{Co-Ka17a}).  They show that every two-element set $D$ with $\min D = \gcd D$ can be realized as the set of distances of a numerical monoid. As a consequence of Theorem \ref{3.3} we obtain that every finite set is contained in the set of distances of a numerical monoid (this was achieved first by explicit constructions in \cite[Corollary 4.8]{B-C-K-R06}).

\medskip
\begin{corollary} \label{3.4}
For every finite nonempty subset $D \subset \N$ there is a numerical monoid $H$ such that $D \subset \Delta (H)$.
\end{corollary}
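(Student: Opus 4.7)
The plan is to deduce the corollary directly from Theorem~\ref{3.3} by exhibiting a finite set $L \subset \N_{\ge 2}$ whose consecutive differences enumerate $D$, and then pulling back via the realization theorem.

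Given $D \subset \N$ finite and nonempty, I would fix an arbitrary enumeration $D = \{d_1, \ldots, d_n\}$ and define
\[
L = \left\{ 2 + \sum_{i=1}^{j} d_i \;\Big|\; j \in [0, n] \right\} \subset \N_{\ge 2}.
\]
Since each $d_i \ge 1$, the partial sums are strictly increasing, so $L$ consists of $n+1$ distinct elements, all of which are at least $2$. By construction the consecutive gaps of $L$ are precisely $d_1, \ldots, d_n$, hence $\Delta(L) = D$.

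Applying Theorem~\ref{3.3} to this $L$ (with any choice of $f \colon L \to \N$, for instance $f \equiv 1$), one obtains a numerical monoid $H$ and an element $a \in H$ with $\mathsf L(a) = L$. Since $\mathsf L(a) \in \mathcal L(H)$, the definition $\Delta(H) = \bigcup_{L' \in \mathcal L(H)} \Delta(L')$ immediately yields
\[
D = \Delta(L) = \Delta\bigl(\mathsf L(a)\bigr) \subset \Delta(H),
\]
which is the desired conclusion.

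There is no real obstacle in this argument: once Theorem~\ref{3.3} is available, the corollary reduces to the elementary observation that every finite subset of $\N$ arises as the gap sequence of a strictly increasing finite sequence in $\N_{\ge 2}$. The whole content of the corollary is already encoded in the realization theorem.
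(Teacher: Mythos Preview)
Your argument is correct and essentially identical to the paper's own proof: both build $L = \{2, 2+d_1, 2+d_1+d_2, \ldots, 2+d_1+\cdots+d_n\}$, invoke Theorem~\ref{3.3} to realize $L$ as a set of lengths in some numerical monoid $H$, and conclude $D = \Delta(L) \subset \Delta(H)$. Your write-up is slightly more detailed in justifying why the elements of $L$ are distinct and in noting that the choice of $f$ is irrelevant, but the idea is the same.
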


\begin{proof}
Let $D = \{d_1, \ldots, d_k\} \subset \N$ be a finite nonempty subset. By Theorem \ref{3.3} there is a numerical monoid $H$  such that $L = \{ 2, 2+d_1, 2+d_1+d_2, \ldots, 2+d_1+ \ldots + d_k \} \in \mathcal L (H)$ whence $D = \Delta ( L ) \subset \Delta (H)$.
\end{proof}

\smallskip
Let $K$ be a field and $H$ a numerical monoid. The semigroup algebra
\[
K[H] = \Big\{ \sum_{h \in H} a_h X^h \mid a_h \in H \ \text{for all} \ h \in H \ \text{and almost all $a_h$ are zero} \Big\} \subset K[X]
\]
is a one-dimensional noetherian domain and its integral closures $K[X]$ is a finitely generated module over $K[H]$. Thus $K[H]$ is weakly Krull, $\Pic ( K[H] )$ is finite if $K$ is finite whence it  satisfies all arithmetical finiteness results established for weakly Krull Mori domains with finite class group (see \cite{Ge-HK06a} for basic information). However, all results on $\mathcal L \big( K[H] \big)$ so far depend on detailed information on the Picard group and the distribution of height one prime ideals not containing the conductor in the Picard group.

\medskip
\begin{corollary} \label{3.5}
Let $K$ be a field, $L \subset \N_{\ge 2}$  a finite nonempty set,  and $f \colon L \to \N$ a map. Then there is a numerical monoid $H$ and a squarefree element $g \in K[H]$ such that
\[
\mathsf L_{K[H]} (g)  = L \quad \text{and} \quad |\mathsf Z_{K[H],k} (g)|= f (k) \quad \text{for every} \ k \in L \,.
\]
\end{corollary}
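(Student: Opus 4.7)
My plan is to take the numerical monoid $H$ and squarefree $a \in H$ supplied by Theorem \ref{3.3} applied to $L$ and $f$, and set $g = X^a \in K[H]$. The whole proof then reduces to comparing factorizations of $X^a$ in $K[H]$ with factorizations of $a$ in $H$.

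The key observation is a \emph{monomial divisor} property: every divisor of $X^a$ in $K[H]$ is a monomial $cX^m$ with $c \in K^{\times}$ and $m \in H$ satisfying $a - m \in H$. Indeed, since $K[H] \subset K[X]$, any divisor in $K[H]$ of $X^a$ is in particular a divisor in $K[X]$; and because $K[X]$ is factorial with $X$ the unique prime (up to $K^{\times}$) dividing $X^a$, every such divisor has the form $cX^m$. The requirement $cX^m \in K[H]$ forces $m \in H$, and $a-m \in H$ follows from the complementary divisor being an element of $K[H]$. In particular, applying this to $X^a$ itself, every factorization of $X^a$ in $K[H]$ has the form $X^a = (c_1 X^{m_1}) \cdot \ldots \cdot (c_k X^{m_k})$ with $c_i \in K^{\times}$, $m_i \in H$, and $\sum m_i = a$.

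From this I would deduce that the atoms of $K[H]$ dividing $X^a$ are precisely the monomials $cX^u$ with $c \in K^{\times}$ and $u \in \mathcal{A}(H)$. Since $K[H]^{\times} = K^{\times}$, passing to the reduced monoid identifies $cX^u$ with $X^u$, so the atoms modulo units biject with $\mathcal{A}(H)$. Thus the map sending a factorization $\sum_{i=1}^k u_i \in \mathsf{Z}_H(a)$ to the factorization $X^{u_1} \cdot \ldots \cdot X^{u_k}$ of $X^a$ in $K[H]_{\red}$ is a length-preserving bijection $\mathsf{Z}_H(a) \to \mathsf{Z}_{K[H]}(X^a)$. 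This immediately yields $\mathsf{L}_{K[H]}(X^a) = \mathsf{L}_H(a) = L$ and $|\mathsf{Z}_{K[H],k}(X^a)| = |\mathsf{Z}_{H,k}(a)| = f(k)$ for every $k \in L$.

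Finally, to verify that $g = X^a$ is squarefree in $K[H]$, suppose $X^a = b^2 c$ with $b, c$ non-units in $K[H]$. The monomial divisor property forces $b = \beta X^u$ and $c = \gamma X^v$ with $\beta, \gamma \in K^{\times}$ and $u, v \in H$; being non-units means $u \neq 0$ and $v \neq 0$. Comparing exponents gives $a = u + u + v$ with $u, v$ nonzero elements of $H$, contradicting the squarefreeness of $a$ in $H$ from Theorem \ref{3.3}. Hence $X^a$ is squarefree in $K[H]$, completing the proof. No step here is a serious obstacle; the only place to be careful is verifying the monomial divisor property and tracking that the bijection of factorizations is genuinely length-preserving once one mods out by $K^{\times}$.
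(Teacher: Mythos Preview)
Your proof is correct and follows essentially the same approach as the paper: take $g = X^a$ for the $a$ supplied by Theorem~\ref{3.3} and transfer the factorization data from $H$ to $K[H]$. The paper packages your ``monomial divisor property'' as the statement that $H'' = \{cX^h \mid c \in K^{\times},\, h \in H\}$ is a divisor-closed submonoid of $K[H]$ with $H''_{\red} \cong H$, which immediately gives $\mathsf Z_{K[H]}(X^a) = \mathsf Z_H(a)$; you unpack this explicitly via the embedding $K[H] \subset K[X]$, and you also spell out the squarefreeness verification that the paper leaves implicit in the isomorphism $H''_{\red} \cong H$.
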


\begin{proof}
By Theorem \ref{3.3} there is a numerical monoid $H$ and a squarefree element $c \in H$ having the required properties. Clearly, the additive monoid $H$ is isomorphic to the multiplicative monoid of monomials
\[
H'= \{X^h \mid h \in H \} \subset K[H]  \,.
\]
Since $K[H]^{\times} = K^{\times}$, the monoid $H''= \{ cX^h \mid h \in H, c \in K^{\times} \} \subset K[H]$ is a divisor-closed submonoid and  $H''_{\red} \cong H'$. Thus for every $k \in L$ and the element $g=X^c \in K[H]$ we obtain that
\[
|\mathsf Z_{K[H],k}(X^c)| = |\mathsf Z_{H'',k}(X^c)|=|\mathsf Z_{H',k}(X^c)|=|\mathsf Z_{H,k}(c)|=f (k)
\]
whence the assertion follows.
\end{proof}

\providecommand{\bysame}{\leavevmode\hbox to3em{\hrulefill}\thinspace}
\providecommand{\MR}{\relax\ifhmode\unskip\space\fi MR }
% \MRhref is called by the amsart/book/proc definition of \MR.
\providecommand{\MRhref}[2]{%
  \href{http://www.ams.org/mathscinet-getitem?mr=#1}{#2}
}
\providecommand{\href}[2]{#2}

\end{document}